\numberwithin{equation}{section}
\theoremstyle{plain}
\newtheorem{thm}{Theorem}[section]
\theoremstyle{remark}
\newtheorem{rem}{Remark}[section]
\DeclareMathOperator{\td}{d}
\begin{document}

\title[Integral representations and properties of Lah numbers]
{Some integral representations and properties of Lah numbers}

\author[B.-N. Guo]{Bai-Ni Guo}
\address[Guo]{School of Mathematics and Informatics, Henan Polytechnic University, Jiaozuo City, Henan Province, 454010, China}
\email{\href{mailto: B.-N. Guo <bai.ni.guo@gmail.com>}{bai.ni.guo@gmail.com},
\href{mailto: B.-N. Guo <bai.ni.guo@hotmail.com>}{bai.ni.guo@hotmail.com}}
\urladdr{\url{https://www.researchgate.net/profile/Bai-Ni_Guo/}}

\author[F. Qi]{Feng Qi}
\address[Qi]{College of Mathematics, Inner Mongolia University for Nationalities, Tongliao City, Inner Mongolia Autonomous Region, 028043, China}
\email{\href{mailto: F. Qi <qifeng618@gmail.com>}{qifeng618@gmail.com}, \href{mailto: F. Qi <qifeng618@hotmail.com>}{qifeng618@hotmail.com}, \href{mailto: F. Qi <qifeng618@qq.com>}{qifeng618@qq.com}}
\urladdr{\url{http://qifeng618.wordpress.com}}

\begin{abstract}
In the paper, the authors find some integral representations and discover some properties of Lah numbers.
\end{abstract}

\keywords{integral representation; property; Lah number; modified Bessel function of the first kind; exponential function; absolutely convex function; absolutely convex sequence; generating function}

\subjclass[2010]{05A10, 05A19, 05A20, 11B34, 11B37, 11B65, 11B75, 11B83, 11R33, 11Y35, 11Y55, 26A48, 26A51, 33B10, 33C15, 33C20}

\thanks{This paper was typeset using \AmS-\LaTeX}

\maketitle

\section{Introduction}

In combinatorics, Lah numbers, discovered by Ivo Lah in 1955 and usually denoted by $L(n,k)$, count the number of ways a set of $n$ elements can be partitioned into $k$ nonempty linearly ordered subsets and have an explicit formula
\begin{equation}\label{a-i-k-dfn}
L(n,k)=\binom{n-1}{k-1}\frac{n!}{k!}.
\end{equation}
Lah numbers $L(n,k)$ may also be interpreted as coefficients expressing rising factorials $(x)_n$ in terms of falling factorials $\langle x\rangle_n$, where
\begin{equation}
(x)_n=
\begin{cases}
x(x+1)(x+2)\dotsm(x+n-1), & n\ge1,\\
1, & n=0
\end{cases}
\end{equation}
and
\begin{equation}
\langle x\rangle_n=
\begin{cases}
x(x-1)(x-2)\dotsm(x-n+1), & n\ge1,\\
1,& n=0.
\end{cases}
\end{equation}
Lah numbers $L(n,k)$ may be generated by
\begin{equation}\label{Lah-generat-function-non-exp}
\frac1{k!}\biggl(\frac{x}{1-x}\biggr)^k=\sum_{n=0}^\infty L(n,k)\frac{x^n}{n!}.
\end{equation}
For more information on Lah numbers $L(n,k)$, please refer to~\cite[p.~156]{Comtet-Combinatorics-74}.
\par
In the theory of special functions, it is well known that the modified Bessel function of the first kind  $I_\nu(z)$ may be defined~\cite[p.~375, 9.6.10]{abram} by
\begin{equation}\label{I=nu(z)-eq}
I_\nu(z)= \sum_{k=0}^\infty\frac1{k!\Gamma(\nu+k+1)}\biggl(\frac{z}2\biggr)^{2k+\nu}
\end{equation}
for $\nu\in\mathbb{R}$ and $z\in\mathbb{C}$, where $\Gamma$ represents the classical Euler gamma function which may be defined~\cite[p.~255]{abram} by
\begin{equation}\label{gamma-dfn}
\Gamma(z)=\int^\infty_0t^{z-1} e^{-t}\td t
\end{equation}
for $\Re z>0$.
\par
In this paper, we will find some integral representations and properties of Lah numbers $L(n,k)$.

\section{Integral representations of Lah numbers}

We first establish integral representations of Lah numbers $L(n,k)$, in which the exponential function $e^{-1/x}$ and the modified Bessel function of the first kind  $I_1$ is involved.

\begin{thm}\label{lah-number-int-thm}
For $1\le m\le n$ and $x>0$, we have
\begin{equation}\label{lah-number-int-total-eq}
\sum_{k=1}^{n}L(n,k)x^{k}
=\frac{e^{-x}}{x^n}\int_0^\infty I_1\bigl(2\sqrt{t}\,\bigr)t^{n-1/2}e^{-t/x}\td t
\end{equation}
and
\begin{equation}\label{lah-number-int-eq}
L(n,m)=\frac1{m!}\lim_{x\to0^+}\int_0^\infty I_1\bigl(2\sqrt{t}\,\bigr)t^{n-1/2} \frac{\td^m}{\td x^m} \biggl(\frac{e^{-x-t/x}}{x^n}\biggr)\td t.
\end{equation}
\end{thm}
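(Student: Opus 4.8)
The plan is to prove \eqref{lah-number-int-total-eq} by expanding the Bessel function into its defining power series and integrating term by term, and then to obtain \eqref{lah-number-int-eq} by differentiating \eqref{lah-number-int-total-eq} with respect to $x$ and letting $x\to0^+$. First I would put $\nu=1$ and $z=2\sqrt{t}$ in \eqref{I=nu(z)-eq}, using $\Gamma(k+2)=(k+1)!$, to get $I_1\bigl(2\sqrt{t}\,\bigr)=\sum_{k=0}^\infty t^{k+1/2}/[k!\,(k+1)!]$. Substituting this into the right-hand side of \eqref{lah-number-int-total-eq}, interchanging summation and integration (legitimate by nonnegativity of every term for $x,t>0$), and evaluating each integral through the substitution $s=t/x$ and the definition \eqref{gamma-dfn}, so that $\int_0^\infty t^{k+n}e^{-t/x}\td t=(k+n)!\,x^{k+n+1}$, I would arrive at
\begin{equation*}
\frac{e^{-x}}{x^n}\int_0^\infty I_1\bigl(2\sqrt{t}\,\bigr)t^{n-1/2}e^{-t/x}\td t
=e^{-x}\sum_{j=1}^\infty\frac{(n+j-1)!}{(j-1)!\,j!}\,x^{j},
\end{equation*}
after reindexing with $j=k+1$.

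The heart of the argument is then the power-series identity $\sum_{j=1}^\infty\frac{(n+j-1)!}{(j-1)!\,j!}\,x^{j}=e^{x}\sum_{k=1}^n L(n,k)x^{k}$. For this I would use the interpretation of $L(n,k)$ as the connection coefficients from rising to falling factorials, namely the polynomial identity $(x)_n=\sum_{k=1}^n L(n,k)\langle x\rangle_k$. Evaluating it at a positive integer $x=j$ gives $(j)_n=\sum_{k=1}^n L(n,k)\langle j\rangle_k$; since $\langle j\rangle_k=j!/(j-k)!$ for $k\le j$ and $\langle j\rangle_k=0$ for $k>j$, dividing by $j!$ and summing against $x^j$ yields, after exchanging the order of the (positive-term) double sum and writing $m=j-k$,
\begin{equation*}
\sum_{j=1}^\infty\frac{(j)_n}{j!}\,x^{j}
=\sum_{k=1}^n L(n,k)\sum_{m=0}^\infty\frac{x^{m+k}}{m!}
=e^{x}\sum_{k=1}^n L(n,k)x^{k}.
\end{equation*}
Because $(j)_n=(n+j-1)!/(j-1)!$, this is exactly the required identity, and cancelling $e^{x}$ against $e^{-x}$ completes the proof of \eqref{lah-number-int-total-eq}.

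For the second formula I would rewrite \eqref{lah-number-int-total-eq} as $\sum_{k=1}^n L(n,k)x^{k}=\int_0^\infty I_1\bigl(2\sqrt{t}\,\bigr)t^{n-1/2}\bigl(e^{-x-t/x}/x^n\bigr)\td t$ and differentiate both sides $m$ times in $x$, moving $\frac{\td^m}{\td x^m}$ under the integral sign (justified by the rapid decay of the integrand and all its $x$-derivatives near $t=0$ and $t=\infty$). The left-hand side becomes $\sum_{k=m}^n L(n,k)\frac{k!}{(k-m)!}x^{k-m}$, whose limit as $x\to0^+$ keeps only the $k=m$ term and equals $m!\,L(n,m)$; dividing by $m!$ gives \eqref{lah-number-int-eq}.

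I expect the main obstacle to be the combinatorial identity in the middle step: the termwise integration and the differentiation under the integral are routine once nonnegativity and decay are noted, but correctly handling the vanishing $\langle j\rangle_k=0$ for $k>j$ and the reindexing in the double sum is where care is required.
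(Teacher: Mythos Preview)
Your argument is correct, and the derivation of~\eqref{lah-number-int-eq} from~\eqref{lah-number-int-total-eq} is exactly what the paper does. The proof of~\eqref{lah-number-int-total-eq}, however, follows a genuinely different route. The paper does not expand the Bessel series; instead it quotes the Laplace-type representation $e^{1/z}=1+\int_0^\infty I_1(2\sqrt{t}\,)t^{-1/2}e^{-zt}\,\td t$, differentiates it $n$ times in $z$ to obtain $(e^{1/x})^{(n)}=(-1)^n\int_0^\infty I_1(2\sqrt{t}\,)t^{n-1/2}e^{-xt}\,\td t$, and then matches this against the separately known derivative formula $(e^{1/x})^{(n)}=(-1)^n e^{1/x}\sum_{k=1}^n L(n,k)x^{-n-k}$; after the change $x\mapsto 1/x$ this yields~\eqref{lah-number-int-total-eq} directly. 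Your approach is more self-contained---it needs only the series definition~\eqref{I=nu(z)-eq}, the Euler integral~\eqref{gamma-dfn}, and the rising/falling factorial connection---while the paper's approach, though it leans on two cited results, exposes why the identity holds: both sides are the $n$th derivative of $e^{1/x}$ in disguise.
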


\begin{proof}
In~\cite[Theorem~1.2]{simp-exp-degree-revised.tex}, among other things, it was obtained that the function
\begin{equation}\label{exp=k=sum-eq-degree=k+1}
H_k(z)=e^{1/z}-\sum_{m=0}^k\frac{1}{m!}\frac1{z^m}
\end{equation}
for $k\in\{0\}\cup\mathbb{N}$ and $z\ne0$ has the integral representation
\begin{equation}\label{exp=k=degree=k+1-int}
H_k(z)=\frac1{k!(k+1)!}\int_0^\infty {}_1F_2(1;k+1,k+2;t)t^k e^{-zt}\td t
\end{equation}
for $\Re(z)>0$, where ${}_pF_q(a_1,\dotsc,a_p;b_1,\dotsc,b_q;x)$ stands for the generalized hypergeometric series which may be defined by
\begin{equation}\label{hypergeom-f}
{}_pF_q(a_1,\dotsc,a_p;b_1,\dotsc,b_q;x)=\sum_{n=0}^\infty\frac{(a_1)_n\dotsm(a_p)_n} {(b_1)_n\dotsm(b_q)_n}\frac{x^n}{n!}
\end{equation}
for complex numbers $a_i$ and $b_i\notin\{0,-1,-2,\dotsc\}$ and for positive integers $p,q\in\mathbb{N}$.
See also~\cite[Section~1.2]{Bessel-ineq-Dgree-CM.tex} and~\cite[Lemma~2.1]{QiBerg.tex}. When $k=0$, the integral representation~\eqref{exp=k=degree=k+1-int} becomes
\begin{equation}\label{open-answer-1}
e^{1/z}=1+\int_0^\infty \frac{I_1\bigl(2\sqrt{t}\,\bigr)}{\sqrt{t}\,} e^{-zt}\td t
\end{equation}
for $\Re(z)>0$. By the way, the integral representation~\eqref{open-answer-1} has been applied in~\cite{Bell-Stirling-Hypergeometric.tex}. Hence, for $n\in\mathbb{N}$ and $x>0$, we have
\begin{equation}\label{open-answer-deriv}
\bigl(e^{1/x}\bigr)^{(n)}=(-1)^n\int_0^\infty I_1\bigl(2\sqrt{t}\,\bigr)t^{n-1/2} e^{-xt}\td t.
\end{equation}
\par
In~\cite[Theorem~2]{exp-reciprocal-cm.tex} and its formally published paper~\cite[Theorem~2.2]{exp-reciprocal-cm-IJOPCM.tex}, the following explicit formula for computing the $n$-th derivative of the exponential function $e^{\pm1/x}$ was inductively obtained:
\begin{equation}\label{exp-frac1x-expans}
\bigl(e^{\pm1/x}\bigr)^{(n)}
=(-1)^n{e^{\pm1/x}}\sum_{k=1}^{n}(\pm1)^{k}L(n,k)\frac1{x^{n+k}}.
\end{equation}
By the way, the formula~\eqref{exp-frac1x-expans} have been applied in~\cite{notes-Stirl-No-JNT-rev.tex, Bell-Stirling-Lah.tex, Lah-No-Identity.tex, Filomat-36-73-1.tex, Bessel-ineq-Dgree-CM.tex, QiBerg.tex, simp-exp-degree-revised.tex}. Combining~\eqref{open-answer-deriv} and~\eqref{exp-frac1x-expans} and rearranging yield
\begin{align*}
{e^{1/x}}\sum_{k=1}^{n}L(n,k)\frac1{x^{n+k}}
&=\int_0^\infty I_1\bigl(2\sqrt{t}\,\bigr)t^{n-1/2} e^{-xt}\td t,\\
\sum_{k=1}^{n}L(n,k)\frac1{x^{k}}
&=\int_0^\infty I_1\bigl(2\sqrt{t}\,\bigr)t^{n-1/2} x^{n}e^{-xt-1/x}\td t,
\end{align*}
which may be rewritten as~\eqref{lah-number-int-total-eq}.
\par
Differentiating $1\le m\le n$ times on both sides of~\eqref{lah-number-int-total-eq} results in
\begin{equation*}
\sum_{k=m}^{n}L(n,k)\frac{k!}{(k-m)!}x^{k-m}
=\int_0^\infty I_1\bigl(2\sqrt{t}\,\bigr)t^{n-1/2} \frac{\td^m}{\td x^m} \biggl(\frac{e^{-x-t/x}}{x^n}\biggr)\td t.
\end{equation*}
Letting $x\to0^+$ in the above equation leads to~\eqref{lah-number-int-eq}. The proof of Theorem~\ref{lah-number-int-thm} is complete.
\end{proof}

\section{Properties of Lah numbers}

An infinitely differentiable function $f$ on an interval $I$ is called absolutely convex on $I$ if $f^{(2k)}(x)\ge0$ on $I$. See either~\cite[p.~375, Definition~3]{mpf-1993}, or~\cite[p.~2731, Definition~4.5]{97PA127.tex}, or~\cite[p.~617, Definiton~3]{jmaa-ii-97}, or~\cite[p.~3356, Definition~3]{AMSPROC.TEX}. A sequence $\{\mu_n\}_0^\infty$ is said to be absolutely convex if its elements are non-negative and its successive differences satisfy
\begin{equation}
\Delta^{2k}\mu_n\ge0
\end{equation}
for $n,k\ge0$, where
\begin{equation}
\Delta^k\mu_n=\sum_{m=0}^k(-1)^m\binom{k}{m}\mu_{n+k-m}.
\end{equation}

\begin{thm}\label{total-sum-lah-thm}
For $n\in\mathbb{N}$, the total sum of Lah numbers
\begin{equation}
\mathcal{L}_n=\sum_{k=1}^{n}L(n,k)
\end{equation}
is an absolutely convex sequence. Specially, the sequence $\mathcal{L}_n$ is convex.
\end{thm}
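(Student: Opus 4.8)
The plan is to reduce everything to the integral representation~\eqref{lah-number-int-total-eq} already furnished by Theorem~\ref{lah-number-int-thm}. Putting $x=1$ there collapses the left-hand side to $\mathcal{L}_n$ and produces the single-integral formula
\[
\mathcal{L}_n=e^{-1}\int_0^\infty I_1\bigl(2\sqrt{t}\,\bigr)t^{n-1/2}e^{-t}\td t,\qquad n\in\mathbb{N}.
\]
Every factor on the right is non-negative on $(0,\infty)$: the powers $t^{n-1/2}$ and $e^{-t}$ obviously so, and $I_1\bigl(2\sqrt{t}\,\bigr)\ge0$ follows immediately from the series~\eqref{I=nu(z)-eq}, whose coefficients are all positive for a real non-negative argument. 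In particular the case $k=0$ already shows that the terms $\mathcal{L}_n$ are non-negative, which is the first half of the definition of an absolutely convex sequence.

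The core of the argument is to evaluate $\Delta^{2k}\mathcal{L}_n$. Since the finite difference is a finite linear combination, I would substitute the integral representation for each $\mathcal{L}_{n+2k-m}$ and pass the sum under the integral sign. This leaves the common factor $I_1\bigl(2\sqrt{t}\,\bigr)t^{n-1/2}e^{-t}$ multiplied by the polynomial sum $\sum_{m=0}^{2k}(-1)^m\binom{2k}{m}t^{2k-m}$, which the binomial theorem identifies as $(t-1)^{2k}$. Hence
\[
\Delta^{2k}\mathcal{L}_n=e^{-1}\int_0^\infty I_1\bigl(2\sqrt{t}\,\bigr)t^{n-1/2}e^{-t}(t-1)^{2k}\td t.
\]

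Because the exponent $2k$ is even, $(t-1)^{2k}\ge0$ for every $t$, and together with the non-negativity of the remaining factors the whole integrand is non-negative on $(0,\infty)$. Therefore $\Delta^{2k}\mathcal{L}_n\ge0$ for all admissible $n$ and $k$, which is precisely the defining property of an absolutely convex sequence; the asserted ordinary convexity is then just the special case $k=1$, namely $\Delta^2\mathcal{L}_n\ge0$. I do not expect a genuine obstacle in this argument: the only point needing a line of justification is the sign of $I_1\bigl(2\sqrt{t}\,\bigr)$, and the decisive mechanism is simply that the alternating binomial sum telescopes into the \emph{even} power $(t-1)^{2k}$, whose non-negativity is automatic.
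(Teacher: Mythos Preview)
Your proof is correct and follows essentially the same route as the paper: specialize the integral representation~\eqref{lah-number-int-total-eq} at $x=1$ and argue that applying $\Delta^{2k}$ leaves a non-negative integrand. The only cosmetic difference is that the paper phrases the kernel step as ``$t^{x}$ is absolutely convex in $x$ because $\frac{\td^{2k}}{\td x^{2k}}t^{x}=t^{x}(\ln t)^{2k}\ge0$, hence the sequence $t^{n}$ is absolutely convex,'' whereas you compute the finite difference directly via the binomial identity $\Delta^{2k}t^{n}=t^{n}(t-1)^{2k}$; your version is, if anything, the more self-contained of the two.
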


\begin{proof}
Letting $x=1$ in~\eqref{lah-number-int-total-eq} gives
\begin{equation*}
\mathcal{L}_n
=\int_0^\infty I_1\bigl(2\sqrt{t}\,\bigr)t^{n-1/2}{e^{-(1+t)}}\td t.
\end{equation*}
It is clear that the function $t^{x}$ for $t>0$ satisfies $\frac{\td^kt^x}{\td x^k}=t^x(\ln t)^k$. As a result, when $t>0$, the function $t^x$ is absolutely convex with respect to $x$. Consequently, the sequence $t^n$ is absolutely convex. Hence, the sequence $\mathcal{L}_n$ is absolutely convex. The proof of Theorem~\ref{total-sum-lah-thm} is complete.
\end{proof}

\section{A recovery of the formula~\eqref{a-i-k-dfn}}

Finally, as by-product, a recovery of the formula~\eqref{a-i-k-dfn} for Lah numbers $L(n,k)$ may be carried out as follows.
\par
The generating function~\eqref{Lah-generat-function-non-exp} may be rewritten as
\begin{equation}\label{Lah-generat-f-non-exp}
(-1)^k\frac1{k!}\biggl(\frac{x}{1+x}\biggr)^k=\sum_{n=k}^\infty(-1)^n L(n,k)\frac{x^n}{n!}.
\end{equation}
The equation~\eqref{Lah-generat-f-non-exp} may be reformulated as
\begin{equation*}
(-1)^k\frac1{k!}\frac1{(1+x)^k}=\sum_{n=k}^\infty(-1)^n L(n,k)\frac{x^{n-k}}{n!}
=\sum_{n=0}^\infty(-1)^{n+k} L(n+k,k)\frac{x^{n}}{(n+k)!}.
\end{equation*}
Because
\begin{equation}
\frac1{(1+x)^k}=\frac1{(k-1)!}\int_0^\infty t^{k-1}e^{-(1+x)t}\td t,
\end{equation}
we have
\begin{equation*}
\frac1{k!}\frac1{(k-1)!}\int_0^\infty t^{k-1}e^{-(1+x)t}\td t
=\sum_{n=0}^\infty(-1)^{n} L(n+k,k)\frac{x^{n}}{(n+k)!}.
\end{equation*}
Differentiating $m$ times with respect to $x$ on both sides of the above equation gives
\begin{equation*}
(-1)^m\frac1{k!}\frac1{(k-1)!}\int_0^\infty t^{m+k-1}e^{-(1+x)t}\td t
=\sum_{n=m}^\infty(-1)^{n} L(n+k,k)\frac{n!}{(n-m)!}\frac{x^{n-m}}{(n+k)!}.
\end{equation*}
Taking $x\to0^+$ in the above equation yields
\begin{equation*}
(-1)^m\frac1{k!}\frac1{(k-1)!}\int_0^\infty t^{m+k-1}e^{-t}\td t
=(-1)^{m} L(m+k,k)\frac{m!}{(m+k)!}
\end{equation*}
which may be rearranged as
\begin{gather*}
L(m+k,k)=\frac{(m+k)!}{m!}\frac1{k!}\frac1{(k-1)!}\int_0^\infty t^{m+k-1}e^{-t}\td t\\
=\frac{(m+k)!}{m!}\frac1{k!}\frac{(m+k-1)!}{(k-1)!}
=\frac{(m+k)!}{k!}\binom{m+k-1}{k-1}.
\end{gather*}
The formula~\eqref{a-i-k-dfn} is thus recovered.

\section{Remarks}

\begin{rem}
In the early morning of 30 December 2013, the second author searched out the paper~\cite{Lah-Exp-4-Person} in which the formula~\eqref{exp-frac1x-expans} was also found independently by five approaches. The motivation of the paper~\cite{Lah-Exp-4-Person} is different from the ones of~\cite{exp-reciprocal-cm-IJOPCM.tex} and its preprint~\cite{exp-reciprocal-cm.tex}. The motivations of the formula~\eqref{exp-frac1x-expans} in~\cite{exp-reciprocal-cm.tex, exp-reciprocal-cm-IJOPCM.tex} essentially originated from the articles~\cite{Yang-Fan-2008-Dec-simp.tex, property-psi.tex, property-psi-ii-Munich.tex} and their preprints~\cite{property-psi-ii.tex, Yang-Fan-2008-Dec.tex, property-psi.tex-arXiv}. For more information, please refer to the  expository and survey articles~\cite{bounds-two-gammas.tex, Wendel2Elezovic.tex-JIA, Wendel-Gautschi-type-ineq-Banach.tex} and their preprints~\cite{bounds-two-gammas.tex-RGMIA, Wendel-Gautschi-type-ineq.tex-arXiv, Wendel2Elezovic.tex}.
\end{rem}

\begin{rem}
In an-email bearing the date March 7, 2014, Dr. Miloud Mihoubi in Algeria recommended the paper~\cite{Ahuja-Enneking-Fibonacci-1979} to the authors. In~\cite[Lemma]{Ahuja-Enneking-Fibonacci-1979}, it was stated that if
\begin{equation}
P_{m,k}(x)=\sum_{n=1}^mL_k(m,n)x^n,
\end{equation}
then the $m$ roots of $P_{m,k}(x)$ are real, distinct, and non-positive for all $m\in\mathbb{N}$, where the associated Lah numbers $L_k(m,n)$ for $k>0$ may be defined by
\begin{equation}
L_k(m,n)=\frac{m!}{n!}\sum_{r=1}^n(-1)^{n-r}\binom{n}{r}\binom{m+rk-1}{m}
\end{equation}
and $L_k(m,n)=0$ for $n>m$. Since $L_1(m,n)=L(m,n)$, see~\cite[p.~158, Eq.~(4)]{Ahuja-Enneking-Fibonacci-1979}, when $k=1$, the polynomial $P_{m,k}(x)$ becomes
\begin{equation}
P_{m,1}(x)=\sum_{n=1}^mL(m,n)x^n.
\end{equation}
\par
For $n\in\mathbb{N}$, the integer polynomial
\begin{equation}
\mathcal{L}_n(x)=\sum_{k=0}^{n}L(n+1,k+1)x^{k}
\end{equation}
of degree $n$ satisfies
\begin{equation}
\mathcal{L}_n(x)-\frac{P_{n+1,1}(x)}x=\sum_{k=0}^{n}L(n+1,k+1)x^{k} -\sum_{k=1}^{n+1}L_k(n+1,k)x^{k-1} =0.
\end{equation}
\end{rem}

\begin{rem}
This paper is a corrected version of the preprint~\cite{Lah-Number-Int-Property.tex}.
\end{rem}

\end{document}